\newcommand{\mbQ}{\mathbb{Q}}
\newcommand{\mbZ}{\mathbb{Z}}
\newcommand{\mbP}{\mathbb{P}}
\DeclareMathOperator{\N}{N}
\DeclareMathOperator{\NE}{\overline{NE}}
\DeclareMathOperator{\Exc}{Exc}
\newcommand*{\coloneq}{\mathrel{\mathop:}=}
\theoremstyle{plain}
\newtheorem{theorem}{Theorem}[section]
\newtheorem{proposition}[theorem]{Proposition}
\newtheorem{lemma}[theorem]{Lemma}
\newtheorem{claim}[theorem]{Claim}
\theoremstyle{definition}
\newtheorem{definition}[theorem]{Definition}
\theoremstyle{remark}
\newtheorem{remark}[theorem]{Remark}
\title[Effective bounds for the number of MMP-series]
{Effective bounds for the number of MMP-series of a smooth threefold}
\author{Diletta Martinelli}
\address{
	Korteweg-des Vries Institute for Mathematics, Universiteit van Amsterdam, P.O.Box 94248, 1090 GE, Amsterdam}
\email{d.martinelli@uva.nl}
\begin{document}
	\bibliographystyle{alpha}

\begin{abstract}
	We prove that the number of MMP-series of a smooth projective threefold of positive Kodaira dimension and of Picard number equal to three is at most two.  
\end{abstract}

\subjclass[2010]{Primary 14E30; Secondary 14J30}
\keywords{minimal model program, flip, Picard number}

\maketitle


\section{Introduction}

Establishing the existence of minimal models is one of the first steps towards the birational classification of smooth projective varieties. Moreover, starting from dimension three, minimal models are known to be non-unique, leading to some natural questions such as: does a variety admit a finite number of minimal models? And if yes, can we fix some parameters to bound this number? 

We have to be specific in defining what we mean with minimal models and how we count their number. A \emph{marked} minimal model of a variety $X$ is a pair $(Y,\phi)$, where $\phi \colon X\dashrightarrow Y$ is a birational map and $Y$ is a minimal model. The number of marked minimal models, up to isomorphism, for a variety of general type is finite \cite{BCHM10, KM87}. When $X$ is not of general type, this is no longer true, \cite[Example 6.8]{Rei83}.  It is, however, conjectured that the number of minimal models up to isomorphism is always finite and the conjecture is known in the case of threefolds of positive Kodaira dimension \cite{Kaw97}.

In \cite{MST16} it is proved that it is possible to bound the number of marked minimal models of a smooth variety of general type in terms of the canonical volume. Moreover, in dimension three Cascini and Tasin \cite{CT18} bounded the volume using the Betti numbers. This result can be used to show that the number of marked minimal models of a threefold of general type can be bounded using topological data \cite[Corollary 3]{MST16}, solving a conjecture of Cascini and Lazi{\'c} \cite{CL14}.

In this note we address a closely related, although different, question. In the framework of the Minimal Model Program (MMP for short), starting from a smooth projective threefold $X$, in order to find a minimal model of $X$, we need to perform contractions of $K_X$-negative extremal rays. There are two types of birational contractions, either the exceptional locus is a divisor, then we call the contraction \emph{divisorial}; or the exceptional locus has codimension greater than or equal to two, in this case the target of the contraction is no longer $\mathbb{Q}$-factorial and in order to proceed in the program we need to perform a birational surgery called \emph{flip}, see Definition \ref{def:flip}. We call a series of divisorial contractions and flips an \emph{MMP-series}.  In the case of a smooth projective threefold $X$ of positive Kodaira dimension and Picard number equal to three
we bound in an effective way the possible MMP-series for $X$. This is the first non-trivial case, see Lemma \ref{lem:pic2}. 
\begin{theorem} \label{cor:dile1}
	Let $X$ be a smooth projective threefold of positive Kodaira dimension and of Picard number $\rho(X) = 3$, then the number of MMP-series for $X$ is at most two.
\end{theorem}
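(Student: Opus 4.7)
The plan is to deduce Corollary~\ref{cor:dile1} from Theorem~\ref{thm:dile1} by substituting $\rho(X)=3$ into the stated bound and then checking that, in this low--Picard--number setting, every MMP of $X$ is of the restricted form treated in the theorem. Plugging into $\max\{1,\, 3^{\rho(X)-2}[(\rho(X)-2)!]^2\}$ gives $\max\{1, 3\cdot 1\}=3$, so it is enough to show that any MMP of $X$ can be rearranged so that every divisorial contraction precedes every flip.

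I would proceed by a case analysis on the number of divisorial contractions occurring in a given MMP, which equals $3-\rho(X_{\min})\in\{0,1,2\}$ since flips preserve the Picard number while each divisorial contraction lowers it by one. In the case $3-\rho(X_{\min})=0$ the MMP consists only of flips and is already of the ``divisorial contractions then flips'' form with an empty divisorial block, so it is counted by Theorem~\ref{thm:dile1}. In the cases $3-\rho(X_{\min})\in\{1,2\}$ the strategy is a commutation step: whenever a divisorial $K$-negative extremal ray appears at some intermediate stage of the MMP, I would argue that it is already the image of an extremal ray of $\overline{NE}(X)$, so the corresponding divisorial contraction may be performed at the very beginning. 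Iterating, one brings all divisorial contractions to the front and then applies Theorem~\ref{thm:dile1}.

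The heart of the argument, and also the main obstacle, is to justify this commutation. Concretely, if $X\dashrightarrow X_1$ is the flip of an extremal ray $R$ and $X_1\to Y$ is a divisorial contraction of a ray $R'$, I must produce a divisor $E\subset X$ whose class spans a $K_X$-negative extremal ray and whose contraction, followed by a (possibly re-indexed) flip, reaches the same $Y$. With $\rho(X)=3$ the Mori cone $\overline{NE}(X)$ is three-dimensional, so the $K_X$-negative part has only finitely many extremal rays, and the two rays $R$ and the strict transform of $R'$ span at most a two-dimensional extremal face. Checking the finitely many configurations of such a face against the classification of divisorial contractions from \cite{KM98} should yield the desired commutation.

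Once this rearrangement lemma is established, every MMP of $X$ is equivalent to one of the form treated in Theorem~\ref{thm:dile1}, and substituting $\rho(X)=3$ delivers the bound of at most three minimal model programs, which is the content of the corollary.
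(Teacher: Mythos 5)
Your reduction to Theorem~\ref{thm:dile1} and the arithmetic $3^{\rho(X)-2}[(\rho(X)-2)!]^2=3$ are fine, but the way you try to verify the hypothesis of the theorem has a genuine gap, and it also misses the much simpler route the statement actually admits. The key observation you are missing is that no rearrangement is needed at all: since $X$ is smooth, its difficulty is zero (Remark~\ref{rmk:difficulty}), so the \emph{first} step of any MMP must be a divisorial contraction, dropping the Picard number from $3$ to $2$; and once $\rho=2$, any further divisorial contraction would land on a minimal model with $\rho(X')=1$, a case disposed of separately by Remark~\ref{rmk:endPoint}/Lemma~\ref{lem:pic2}, so all remaining steps are flips (in fact at most one, since the difficulty after the contraction is at most $1$). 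Thus every MMP of $X$ with $\rho(X)=3$ is \emph{automatically} of the form ``divisorial contractions then flips,'' and Theorem~\ref{thm:dile1} applies directly. Relatedly, your case of an MMP consisting only of flips cannot occur, precisely because $d(X)=0$ on the smooth $X$.

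Your proposed commutation lemma, which you yourself flag as the heart of the argument, is left unproved (``should yield the desired commutation'') and is problematic as stated. To move a divisorial contraction appearing after a flip to the front, you would need the strict transform on $X$ of the contracted divisor to span a $K_X$-negative extremal ray of $\overline{NE}(X)$, which is not automatic; your auxiliary claim that the $K_X$-negative part of the cone has only finitely many extremal rays is also unjustified for threefolds of non-general type (the paper itself cites \cite[Theorem 7.1]{Lesieutre} for an example with infinitely many $K_X$-negative extremal rays). Finally, even if the commutation were established, showing that every MMP is ``equivalent to'' a rearranged one of the restricted form would not by itself bound the number of distinct MMPs: two different programs could rearrange to the same restricted program, so you would additionally need the rearrangement to be injective, or, as in the paper, that every MMP literally has the restricted form.
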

Explicit examples of MMP-series for a threefold as in Theorem \ref{cor:dile1} can be found in Mori's classification of divisorial contractions for a smooth projective threefold, see \cite[Theorem 3.3, Corollary 3.4, Theorem 3.5]{Mor82}. The target of the first divisorial contraction is Gorenstein, in all but one case where the contraction is the blow up over a terminal quotient singularity of type $\frac{1}{2}(1,1,1)$, see \cite{Kaw96}. In this case it is possible, after a flip, to reach a minimal model with Picard number equal to two and difficulty equal to zero (see Definition \ref{def:difficulty} for the definition of difficulty).  
\\

Theorem \ref{cor:dile1} reduces quickly to finding a bound for the number of possible flipping curves passing through a terminal singularity, see \cite[Definition 2.34]{KM98}. Note that in the case of a smooth surface $S$ of positive Kodaira dimension, the minimal model is unique \cite[Definition 1.30]{KM98} and, therefore, we cannot have two (--1)-curves $E_1$ and $E_2$ passing through the same point. Indeed, they both should be contracted before reaching the minimal model of $S$, but the contraction of $C_1$ transforms $C_2$ into a curve with non-negative selfintersection, and so impossible to contract.

The difficulty in generalizing this result to higher Picard rank comes from the fact that we cannot control the number of flipping curves contained in a divisor that is later contracted in the MMP-series. In the non-general type case there exists an example of a threefold $X$ where this number of curves is infinite, hence producing a new example of a variety with non-negative Kodaira dimension and an infinite number of $K_X$-negative extremal rays, see \cite[Theorem 7.1]{Les15}. In the general type case this cannot happen because of the finiteness statement proved in \cite{BCHM10}, and there is still hope to bound the total number of MMP-series using the topology of the variety. Also, with the current techniques it does not seem possible to relax the smoothness hypothesis in Theorem \ref{cor:dile1}, see Remark \ref{rmk:singul3}. In the case of Picard number equal two, instead, we can give an easy bound also in the case of terminal threefolds, see Remark \ref{rmk:pic2terminal}.


The paper is organized as follows: in Section \ref{sec:prelim} we collect some preliminary notions, mainly about the MMP in dimension three. The reader in need of more details should refer to \cite{KM98}. In Section \ref{sec:proof}
we prove Theorem \ref{cor:dile1}.

\section*{Acknowledgements}
This work is part of my Ph.D. thesis. I would like to thank my supervisor Paolo Cascini for proposing me to work on this problem and for constantly supporting me with his invaluable help and comments. I also would like to thank Ivan Cheltsov, Alessio Corti, Claudio Fontanari, Enrica Floris, Stefan Schreieder and Luca Tasin for the many fruitful conversations we had about this subject.

The results of this paper were conceived when I was supported by a Roth Scholarship.

\section{Preliminary Results} \label{sec:prelim}
%
All the varieties that we are going to consider are assumed to be projective and defined over the field of the complex numbers. 
\\

We will always refer to a MMP-series as a series of divisorial contractions and flips performed during a $K_X$-negative MMP. In this context, a minimal model for a variety $X$ is just an output that admits no further MMP-series.
\\

The varieties appearing in the steps of a MMP-series for a smooth threefold are projective $\mbQ$-factorial varieties characterized by a type of mild singularities called terminal, see \cite[Definition 2.34]{KM98}.

\subsection{The Picard number}
Let $X$ be a normal variety. Two Cartier divisors $D_1$ and $D_2$ on $X$ are numerically equivalent, $D_1 \equiv D_2$, if they have the same degree on every curve on $X$, i.e. if $D_1 \cdot C = D_2 \cdot C$ for each curve $C$ in $X$. The quotient of the group of Cartier divisors modulo this equivalence relation is denoted by $\N^1(X)$.

We can also define $\N^1(X)$ as the subspace of cohomology $H^2(X, \mbZ)$ spanned by divisors. 
%
We write $\N^1(X)_{\mbQ} \coloneq \N^1(X) \otimes_{\mathbb{Z}} \mathbb{Q}$.

\begin{definition} \label{def:picard}
	We define $\rho(X) \coloneq \dim_{\mbQ} \N^1(X)_{\mbQ}$ and we call it the Picard number of $X$.
\end{definition}
We remark that $\rho(X) \leq b_2$, the second Betti number of $X$, that depends only on topological information of $X$.

Similarly, two $1$-cycles $C_1$ and $C_2$ are numerically equivalent if they have the same intersection number with any Cartier divisor. We call $\N_1(X)$ the quotient group and we write $\N_1(X)_{\mbQ} \coloneq \N_1(X) \otimes_{\mathbb{Z}} \mathbb{Q}$. We can also see $\N_1(X)$ as the subspace of homology $H_2(X, \mbZ)$ spanned by algebraic curves. See for details \cite[Section 1.4]{Deb13}. We denote by $\NE(X)$ the cone of $\N_1(X)_{\mbQ}$ generated by effective 1-cycles and we call it the cone of curves on $X$, see for instance \cite[Section 1.3]{KM98}. 
\\

We will also use the following defintion.
\begin{definition} \label{def:rhoRestr}
	Let $E$ be a subscheme contained in a projective variety $X$. We consider the following natural map
	\begin{equation*}
	\psi: \NE(E)_{\mbQ} \longrightarrow \NE(X)_{\mbQ}
	\end{equation*}
	and we denote  
	$\NE(E|X)_{\mbQ} \coloneq \psi(\NE(E)_{\mbQ}) \subseteq \NE(X)_{\mbQ}$,
	and $\rho(E|X) \coloneq \dim \NE(E|X)_{\mbQ}$.
	Note that $\ker{\psi}$ might be non-trivial.
	
\end{definition}

\subsection{The difficulty}

In dimension three, the existence and termination of flips was proved by Mori and Shokurov. A key ingredient for the proof of termination is the so called \emph{difficulty} of $X$, introduced by Shokurov.

\begin{definition} \cite[Definition 2.15]{Sho86}, \cite[Definition 6.20]{KM98} \label{def:difficulty}
	Let $X$ be a projective normal $\mathbb{Q}$-Gorenstein threefold. The \emph{difficulty} of $X$ is defined as follows	\begin{equation*}
	d(X) \coloneq \#\{ E  \text{ prime divisor } | \\\ a(E, X) < 1, E \text{  is exceptional over } X\}, 
	\end{equation*}
	where $a(E, X)$ is the discrepancy of $E$ with respect to $X$. 
\end{definition}

\begin{remark}\label{rmk:difficulty}
	Note that the difficulty always goes down under a flip, and if $X$ is smooth, then $d(X) = 0$ and we cannot have any flips. See \cite[Lemma 3.38]{KM98}.
\end{remark}


We also recall that minimal models are connected by a sequence of flops, \cite[Theorem 4.9]{Kol89}.
\\

We recall here the definition of a flip, since it is central in the proof of Theorem \ref{cor:dile1}.

\begin{definition} \cite[Definition 3.33]{KM98} \label{def:flip}
Let $X$ be a normal scheme. A flipping contraction is a proper birational morphism $f \colon X \to Y$ to a normal scheme $Y$ such that the exceptional locus $\Exc(f)$ has codimension at least two in $X$ and $-K_X$ is $f-$ample. A normal scheme $X^+$ together with a proper birational morphism $f^+ \colon X^+ \to Y$ is called a \emph{flip} of $f$ if
\begin{itemize}
\item $K_{X^+}$ is $\mathbb Q$-Cartier.
\item $K_{X^+}$ is $f^+$-ample.
\item $\Exc(f^+)$ has codimension at least two in $X^+$.
\end{itemize} 

By slight abuse of terminology, we also call the induced rational map $\phi \colon X \dashrightarrow X^+$ a flip. A flip gives rise to the following commutative diagram.

\[\xymatrixcolsep{3pc}\xymatrix{
		X \ar[dr]_{f}  \ar@{-->}[rr]^{\phi} & &X' \ar[dl]^{f^+}  \\
		& Y& \\
	}\]
A curve $C \in \Exc(f)$ is called a \emph{flipping} curve and $C^+ \in \Exc(f^+)$ is called a \emph{flipped} curve.	
	
\end{definition}

For the definitions of divisorial contraction and flop we refer to \cite[Proposition 2.5, Definition 6.10]{KM98}. We just recall that under a divisorial contraction $f \colon X \dashrightarrow X'$ the Picard number drops by one, i.e. $\rho(X') = \rho(X) - 1$. If $f$ is a flip instead, the Picard number does not change, i.e $\rho(X') = \rho(X)$.

\section{Proof of Theorem \ref{cor:dile1}} \label{sec:proof}

In this section, we prove Theorem \ref{cor:dile1}.
The strategy of the proof is to first bound the number of steps of the MMP-series and then count how many are the possible divisorial contractions to a point or to a curve and how many flips con appear in a MMP-series.
\\

	Our starting point is a smooth projective threefold $X$ of positive Kodaira dimension. As we recalled in Remark \ref{rmk:difficulty}, this means that the difficulty of $X$ has to be zero and no flips are possible. Then, the first operation of the MMP-series for $X$ has to be a divisorial contraction.
	If $\rho(X) = 1$, no contractions are possible. 
	
	\begin{lemma} \label{lem:pic2}
		Let $X$ be a smooth projective threefold of non-negative Kodaira dimension such that $\rho(X) = 2$. Then there is only a unique MMP-series for $X$.
	\end{lemma}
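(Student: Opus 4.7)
The plan is to show that from a smooth threefold of positive Kodaira dimension with Picard number two there is exactly one $K_X$-negative extremal ray, which then yields a single contraction leading directly to the minimal model.

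First I would observe that since $X$ is smooth, Remark \ref{rmk:difficulty} rules out flips as the first step. By Mori's cone/contraction theorem for smooth threefolds, any $K_X$-negative extremal contraction is either a divisorial contraction or a Mori fiber space, and the latter is excluded because $\kappa(X) > 0$. Hence the first step of any MMP for $X$ must be a divisorial contraction associated to some $K_X$-negative extremal ray of $\overline{NE}(X)$.

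Next I would use the hypothesis $\rho(X) = 2$ to control how many such rays exist. The Mori cone $\overline{NE}(X)$ is a two-dimensional closed convex cone with exactly two extremal rays $R_1$ and $R_2$. If both were $K_X$-negative, then $-K_X$ would be positive on $\overline{NE}(X) \setminus \{0\}$, so by Kleiman's criterion $-K_X$ would be ample; this makes $X$ Fano, hence $\kappa(X) = -\infty$, contradicting the assumption. Therefore at most one of $R_1, R_2$ is $K_X$-negative. If $K_X$ is already nef, the MMP for $X$ is the trivial one and uniqueness is immediate. Otherwise there is a unique $K_X$-negative extremal ray, giving a unique divisorial contraction $f \colon X \to X'$.

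To finish I would examine $X'$: it has terminal singularities and $\rho(X') = \rho(X) - 1 = 1$, and $\kappa(X')=\kappa(X)>0$ since Kodaira dimension is a birational invariant. On a variety of Picard number one the Mori cone is a single ray, and the same Fano/$\kappa$ dichotomy as above forces $K_{X'}$ to be nef. Thus the MMP terminates at $X'$ after this unique step, proving there is only one MMP for $X$. The only real point to be careful about is invoking the correct contraction theorem to exclude small contractions from a smooth threefold; once that is in hand, the argument is essentially a one-line convexity observation about the two extremal rays of $\overline{NE}(X)$.
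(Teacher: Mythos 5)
Your argument is correct, but it is not the paper's argument. The paper also begins by excluding flips via the difficulty, but then proves uniqueness indirectly: if two different divisorial contractions $\phi',\phi''$ led to different varieties $X', X''$ of Picard number one, these would both be minimal models of $X$ and hence connected by a nontrivial sequence of flops by Koll\'ar's theorem, which is impossible since no contractions exist from a variety with $\rho = 1$. You instead argue directly on the cone of curves: with $\rho(X)=2$ the closed cone $\NE(X)$ is a pointed two-dimensional cone with exactly two extremal rays, both cannot be $K_X$-negative (else $-K_X$ would be ample by Kleiman, contradicting $\kappa(X)>0$), Mori fibre spaces are excluded by non-uniruledness, and the unique negative ray gives the unique (divisorial) first step, after which $\rho=1$ and the same dichotomy forces $K$ nef. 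Your route is more elementary and self-contained --- it avoids the flop-connectedness of minimal models entirely, handles the case $K_X$ nef explicitly (which the paper's proof silently skips), and proves something slightly stronger, namely uniqueness of the extremal ray and hence of the contraction itself rather than only of the outcome. What the paper's route buys is uniformity: the flop-connectedness argument is exactly the tool reused in Lemmas \ref{lem:DivContrPoint}, \ref{lem:DivContrCurve} and Proposition \ref{prop:flips}, and it scales to higher Picard number, whereas your ray-counting is special to $\rho=2$. One small point to make explicit in your write-up: a $K_X$-negative birational contraction from $X$ with $\rho(X)=2$ must be the contraction of a single extremal ray (a two-dimensional face would drop the Picard number to zero), so counting rays really does count all possible first steps.
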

	
	\begin{proof}
	We remark that, $X$ being smooth, the first operation in the MMP-series needs to be a divisorial contraction $\phi' \colon X \to X'$. Since the Picard number has dropped by one, $\rho(X') = 1$, and so $X'$ admits neither flipping nor flopping contractions, and no flips nor flops are possible. In particular, $X'$ is minimal. If there were another divisorial contraction $\phi'' \colon X \to X''$, then two minimal models $X'$ and $X''$ would be connected by a sequence of flops \cite[Theorem 4.9]{Kol89}, a contradiction. We conclude that $\phi'$ is the only MMP-series for $X$.
%
	\end{proof}
%
	\begin{remark} \label{rmk:endPoint}
	Let $X'$ be a minimal model for $X$, then either $X'$ is unique or we can assume $\rho(X') \geq 2$. Indeed, if it were $\rho(X') = 1$ then $X'$ would not admit any flopping contractions and so no flops are possible. We then conclude that $X'$ is the unique minimal model of $X$ since different minimal models are related via flops.
\end{remark}

\begin{remark} \label{rmk:pic2terminal}
If we relax the smoothness hypothesis and we assume that $X$ has terminal singularities and $\rho(X) = 2$, the difficulty of $X$ might be bigger that zero. In particular, we can assume that $d(X) = k$, where $k$ is a natural number. If we consider a MMP-series for $X$, at any step there can be either a divisorial contraction or a flip. If there is a divisorial contraction, the target of the contraction will be a variety $Y$ with $\rho(Y) = 1$ and so we reach an end point, see Remark \ref{rmk:endPoint}. Hence, we are left to count how many flips we can have at any step: since the Picard number is two, at any step we have two extremal rays and so at most two possible flips. We conclude that we have at most $2^k$ possible MMP-series for $X$.
\end{remark}

Let us proceed now with the bound for the number of steps. It is a calculation that follows from the termination of flips in dimension three, see \cite[Lemma 3.1]{CZ14}.

\begin{lemma} \label{lem:steps} 
	Let $X$ be a projective threefold with terminal singularities such that $\rho(X) \geq 3$ and let $X'$ be the outcome of a MMP-series for $X$. We suppose in addition that $\rho(X') \geq 2$. Let $l$ be the length of a MMP-series of $X$. Then $l$ is at most $2(\rho(X) - 2)$.
\end{lemma}

\begin{proof}
	We denote by $l_D$ the total number of divisorial contractions and by $l_F$ the total number of flips. 
	Clearly, $l = l_D + l_F$.

	Under a divisorial contraction the Picard number drops by one. Hence, $l_D = \rho(X) - \rho(X') \leq \rho(X) - 2$. To conclude the proof, we claim that $l_F \leq l_D$. Under a flip, the Picard number is stable and we need to consider the difficulty $d(X)$, see Definition \ref{def:difficulty}. 
	If $X$ is smooth,  $d(X) = 0$ and no flips are possible, see Remark \ref{rmk:difficulty}. Moreover, if $X_{i-1} \to X_i$ is a divisorial contraction, then
	\begin{equation*}
	d(X_i) \leq d(X_{i - 1}) + 1,
	\end{equation*}
	see for instance \cite[Lemma 3.1]{CZ14}.
	Otherwise, if $X_{i-1} \dashrightarrow X_i$ is a flip, then
	\begin{equation*}
	d(X_i) \leq d(X_{i - 1}) - 1,
	\end{equation*}
	because flips strictly improve the singularities (see \cite[Lemma 3.38, Definition 6.20, Lemma 6.21]{KM98}). We conclude that $d(X') \leq d(X) + l_D - l_F$. Thus, $ l_F \leq l_D$ and $l \leq 2(\rho(X) - 2)$.
\end{proof}

We now proceed by counting how many choices we have for the series of divisorial contractions.

\begin{lemma} \label{lem:divContr1}
	Let $X$ be a smooth projective threefold of non-negative Kodaira dimension and let $\phi' \colon X \dashrightarrow X'$ be a MMP-series for $X$. Then there are at most $2^{\rho(X) - 2}(\rho(X) - 2)!$ choices for the series of divisorial contractions from $X$ to $X'$.
\end{lemma}

\begin{proof}
	We recall that we denote by $l_D$ the total number of divisorial contractions in a MMP-series for $X$. Let $\{E_i'\}_{1 \leq i \leq l_D}$  be the set of prime divisors on $X$ which are exceptional over $X'$. Let $\phi'' \colon X \dashrightarrow X''$ be a MMP-series for $X$, different from $\phi'$.
	\begin{claim}
		The set $\{E_i'\}_{1 \leq i \leq l_D}$ is independent of the choice of $\phi'$. 
	\end{claim}
\begin{proof}[Proof of the Claim]
	 Let $E''$ be a prime divisor on $X$ which is exceptional over $X''$. Assume that $E'' \neq E_i$ for every $1 \leq i \leq l_D$. Then $E''$ is not exceptional over $X'$, so in particular it is not contracted by $\phi'$ and $\phi'(E'')$ is a divisor in $X'$. Since, $X'$ and $X''$ are minimal models, we know that they are connected via a sequence of flops $\eta$, 
	see \cite[Theorem 4.9]{Kol89}. We have, therefore, the following diagram.
	
	\[\xymatrixcolsep{5pc}\xymatrix{
		X \ar[dr]_{\phi''}  \ar[r]^{\phi'} & X' \ar@{-->}[d]^{\eta}  \\
		& X'' \\
	}\]
	Since $E''$ is exceptional over $X''$, we obtained that $\phi'(E'')$ needs to be contracted by $\eta$, but this is a contradiction because flops are isomorphisms in codimension one. 
\end{proof}
Then the series of divisorial contractions from $X$ to $X'$ just contracts $\{E_i'\}_{1 \leq i \leq l_D}$ one by one at each time. Respecting the order of contractions of $\{E_i'\}_{1 \leq 1 \leq l_D}$, there are at most $l_D! \leq (\rho(X) - 2)!$ choices.
\\

Let $E$ be a prime divisor contracted by an extremal divisorial contraction in the MMP-series $\phi' \colon X \dashrightarrow X'$. We can conclude the proof thanks to Lemma \ref{lem:waysDiv} that shows that it is possible to contract $E$ in at most two ways.
\end{proof}

\begin{lemma} \label{lem:waysDiv}
	Let $Y$ be a projective threefold with terminal singularities. Let $E \subseteq Y$ be a prime divisor that can be contracted by a $K_Y$-negative extremal contraction. Then there are at most two ways to contract $E$.
\end{lemma}

\begin{proof}
	Let $R$ be a $K_Y$-negative extremal ray in the cone of curves $\NE(Y)$ and let $g_R \colon Y \to Y'$ be the associated contraction. Let us assume that $E$ is contracted by $g_R$. If $g_R$ contracts $E$ to a point, then it means that all the equivalence classes of curves on $E$ are numerically proportional to each other in $Y$, i.e. $\rho(E|Y) = 1$, see Definition \ref{def:rhoRestr}. If $g_R$ contracts $E$ to a curve $C$ instead, we have the following diagram

	\[\xymatrixcolsep{5pc}\xymatrix{
     \NE(E|Y)_{\mbQ} \ar@{^{(}->}[d] \ar[r]^{\beta} & \NE(C|Y')\cong\NE(C)\cong \mathbb Z \ar@{^{(}->}[d]\\
	 \NE(Y) \ar[r]^{\alpha} & \NE(Y') \\
	}\]
where $\alpha$ has kernel of dimension one because it is an extremal contraction, therefore the same holds for $\beta$ and so $\rho(E|Y) = 2$ and there can be at most two extremal rays that generate the contraction.  	
%
\end{proof}

\begin{remark}
	The simplest example of a prime divisor $E$ contracted to a curve in two different ways is the case of Atiyah's flop, see for instance \cite[Example 1.16]{HM10}, where $E \cong \mathbb{P}^1 \times \mbP^1$. See \cite{MP20} and \cite{Tzi05} and references therein for more examples and a partial classification of contractions of a divisor to a curve on a terminal threefold.
	
	
\end{remark}
\subsection{Bounding the last flip}
We now prove that if the last operation of the MMP is a flip it can then be obtained in a unique way. We refer to Definition \ref{def:flip} for the definition and notation of a flip.

\begin{proposition} \label{prop:flips}
	Let $X$ be a projective threefold of positive Kodaira dimension with terminal singularities. Let $\psi \colon X \dashrightarrow X_{\text{min}}$ be a flip such that $X_{\text{min}}$ is a minimal model for $X$. Then $\psi$ is the only possible flip.
\end{proposition}
\begin{proof}
Assume by contradiction that there exists another flip $\psi'$ different from $\psi$. Hence $X$ fits into the following commutative diagram.

		\[\xymatrixcolsep{3pc}\xymatrix{
		X' \ar[dr]_{f'^+}& & X \ar[dr]_{f} \ar[dl]^{f'}  \ar@{-->}[rr]^{\psi}  \ar@{-->}[ll]_{\psi'} & & X_{\text{min}} \ar[dl]^{f^+}  \\
		& Y'& & Y& \\
	}\]
Since $\psi$ and $\psi'$ are different flips, there exists at least a curve $\xi'$ that is a flipping curve for $\psi'$ but not for $\psi$: i.e. $\xi' \subseteq  \Exc(f')$ but $\xi' \not\subseteq  \Exc(f)$. In particular, $\xi'$ belongs to the locus where $\psi$ is an isomorphism and $\psi(\xi') \subseteq  X_{\text{min}}$.

	Thanks to Abundance Theorem \cite{Kol92}, there exists an integer $m$ such that $|mK_{X_{\text{min}}}|$ is base point free. Therefore, we can choose a general surface
	\begin{equation*} \label{eq:Schoice}
	S' \in |mK_{X_{\text{min}}}|
	\end{equation*}
	in such a way that it does not contain $\psi(\xi') \subseteq  X_{\text{min}}$. Notice that the hypothesis that $X$ is of positive Kodaira dimension is used here. Then let $S \coloneq (\psi'^{-1})_*(S')$ be the strict transform of $S'$. Since flips are isomorphisms in codimension one, $S \in |mK_{X}|$. Since $\xi' \subseteq \Exc(f')$ and $-K_X$ is $f'$-ample, we have that $K_X \cdot \xi' < 0$, so in particular $\xi' \subseteq  S$. Since $\psi(\xi')$ belongs to $X_{\text{min}}$ this implies that $\psi_{|S}(\xi')$ belongs to $S'$. By the definition of $S'$, we reach a contradiction.
	
%
%
%
%
%
\end{proof}

\begin{remark} \label{rmk:2flips}
We cannot hope to iterate the proof of Proposition \ref{prop:flips} to bound the number of MMP-series containing more that one flip. In order to do that we need to bound the number of extremal rays in the last but one flip, and the information that we have about the last flip coming from Proposition \ref{prop:flips} do not give insights on the problem. For instance, assume to be in the following situation 
\[\xymatrixcolsep{3pc}\xymatrix{
		X \ar[dr]_{f_1} \ar@{-->}[rr]^{\psi_1} & & X_1 \ar[dr]_{f_2} \ar[dl]^{f_1^+}  \ar@{-->}[rr]^{\psi_2}  & & X_{\text{min}} \ar[dl]^{{f_2}^+}  \\
		& Y'& & Y& \\
	}\]
and assume that there are $k$ distinct $K_X$-negative extremal rays $C_1, \dots, C_k$ on $X$. Let $\psi_1$ be the flip of $C_1$, i.e. $C_1 \subseteq \Exc(f_1)$ and $C_2, \dots, C_k \not\subseteq \Exc(f_1)$. We know, thanks to Proposition \ref{prop:flips}, that $\psi_1(C_2), \dots, \psi_1(C_k)$ are all contained in the $\Exc(f_2)$ and so they are, in particular, all numerically equivalent, but we can not use this fact to impose a bound on the number $k$.
\end{remark}

%
%
\subsection{Proof of Theorem \ref{cor:dile1}}

Now we have all the ingredients to prove Theorem \ref{cor:dile1}.
%
%

\begin{proof}[Proof of Theorem \ref{cor:dile1}]
	Let $X$ be a smooth projective threefold of positive Kodaira dimension, such that $\rho(X) = 3$. Let $X_{\text{min}}$ be a minimal model for $X$. We can assume that $\rho(X_{\text{min}}) \geq 2$, because otherwise $X_{\text{min}}$ is unique, see Remark \ref{rmk:endPoint}. In this case the graph of the possible MMP-series for $X$ is extremely simple: using Lemma \ref{lem:steps} we have at most two steps in the MMP-series, the first operation is a divisorial contraction followed by a flip. Applying Lemma \ref{lem:divContr1} we have that the first divisorial contraction can be obtained in at most two ways and then we conclude that the last flip can be realized in a unique way thank to Proposition \ref{prop:flips}. This concludes the proof.
%
\end{proof}

\begin{remark} \label{rmk:singul3}
It is important to notice that, even though several of the preliminary results also hold in the case of a terminal threefold, the proof of Theorem \ref{cor:dile1} holds only for a smooth threefold. Indeed, even if we consider the simplest case possible of a terminal threefold $Y$ such that $\rho(Y) = 3$ and $d(Y) = 1$, we could have an MMP-series composed by a flip, a divisorial contraction and a final flip and with the techniques contained in this paper it is not possible to bound MMP-series containing more that one flip, see Remark \ref{rmk:2flips}.
\end{remark}

\end{document}